\colorlet{shadecolor}{yellow}
\newtheorem{theorem}{Theorem}
\newtheorem{remark}{Remark}
\newcommand{\figref}[1]{Fig.~\ref{#1}}
\title{\LARGE \bf
Jump Law of Co-State in Optimal Control for State-Dependent Switched Systems and Applications
}
\author{Mi~Zhou$^1$ ~~~
      Erik~I.~Verriest$^1$ ~~~
      Yue~Guan$^2$ ~~~
      Chaouki~Abdallah$^1$
  \thanks{$^1$ Mi Zhou, Erik I. Verriest, and Chaouki Abdallah are with the School of Electrical and Computer Engineering, Georgia Institute of Technology. Email: {\tt\small \{mzhou91, erik.verriest, ctabdallah\} @gatech.edu }.}
  \thanks{$^2$ Yue Guan is with the School of Aerospace Engineering, Georgia Institute of Technology. Email: {\tt\small yguan44@gatech.edu }.}%
  }
\begin{document}

\maketitle
\begin{abstract}
This paper presents the jump law of co-states in optimal control for state-dependent switched systems.
The number of switches and the switching modes are assumed to be known a priori.
A proposed jump law is rigorously derived by theoretical analysis and illustrated by simulation results.
An algorithm is then proposed to solve optimal control for state-dependent hybrid systems.
Through numerical simulations, we further show that the proposed approach is more efficient than existing methods in solving optimal control for state-dependent switched systems.
\end{abstract}

%
\IEEEpeerreviewmaketitle


\section{Introduction} \label{sec:intro}
The optimal control of hybrid systems has been widely studied over the past few decades.
In practice, many dynamical systems have hybrid characteristics, such as fermentation processes \cite{Liu2014OptimalCO}, aerospace systems \cite{multiphase}, robots \cite{MITcheetah,Magnusrobots}, as well as social sciences \cite{skating} and natural systems \cite{snelllaw}. 
Generally, hybrid control systems are dynamical systems that exhibit interactions between continuous and discrete dynamics \cite{Liberzon2013SwitchedS}.
Switched systems are a particular class of hybrid dynamical systems composed of a family of continuous or discrete time subsystems and a law governing the transition between these subsystems.
For example, the following is a switched system:
\begin{eqnarray}
\dot x & = & A_\alpha x+B_\alpha u, \nonumber ~\\
y & = & C_\alpha x.
\end{eqnarray}
where $x \in \mathbb{R}^N$ is the state, $u\in \mathbb{R}^m$ is the control input, $y \in \mathbb{R}^p$ is the output, and $\alpha \in \left \{ 1,2,...,n \right \}$ is the switching mode or index.
The studies of hybrid switched systems have focused on their stability analysis \cite{Daafouz}, the optimal control of their switching modes \cite{FengSurvey,Stellato}, multi-mode switching \cite{Verriest_M3D}, as well as on the special class of piece-wise affine systems~\cite{PiecewiseAffine} and algorithms for solving such systems \cite{Shaikh, Xuping, Xuping2002}, etc.

Switched systems may be classified into state-dependent and time-dependent switched systems.
In a state-dependent switched system, the continuous state space is partitioned into a finite number of regions by several switching interfaces.
In each region, the system has continuous dynamics.
When the system trajectory hits a switching interface, the system dynamic ``switches" and the system state either jumps (i.e., impulse effects) or continues to evolve in a continuous fashion (i.e., no state jumps) at the switching point even though the trajectory may no longer be differentiable at that point.
In this paper, we will focus on switched systems without state jumps.
The scenario when a state-dependent switched system experiences multiple switches  (i.e., Zeno behavior) at the interface is out of the scope of this paper.

In this paper, we consider state-dependent switched systems whose state remains continuous at the switching interface and only crosses the switching interface once.
Our aim is to minimize (or maximize) a continuous (or discontinuous) performance index with Lagrange-type form for a fixed initial position, a given terminal time, and a-priori given switching interface. 

The following is a compilation of previous work towards the limited scope problem.
Reference \cite{Witsenhausen} formulated a general Mayer-type optimization problem and studied its dynamics, optimal control, the notion of well-behaved solution, and necessary conditions for optimality.
The article also provided geometric intuition of the jump conditions of the co-state, namely, $a \lambda(\tau-)+b\lambda(\tau+)+c \; \mathrm{grad}( g\left[x(\tau)\right])=0$, where $\lambda$ is the co-state, $\mathrm{grad} (g\left[x(\tau)\right])$ is the gradient of the switching interface at the switching state, $a$, $b$, $c$ are constants, and $\tau$ is the switching time instant.
Reference\cite{Shaikh} formulated a similar problem and employed classical variational and needle variation techniques to prove a set of necessary conditions for such systems, which showed the optimality condition related to the co-state satisfying $\lambda(t_s-)^o=\lambda(t_s+)^o + p \Delta_x m|_{t=ts}$, where $t_s$ is the switching time instant, $m(x,t)=0$ is the switching interface, and $p$ is a constant.
The authors then proposed a gradient descent algorithm to solve the optimal control problem indirectly.
Both papers provided a good qualitative analysis of the co-state, resulting in the optimality condition for the co-state.
However, the full characterization of the jump law for the co-state 
(e.g. the exact expression at the switching interface)
and its applications have never been thoroughly examined to the best of our knowledge.
Therefore, in this work, we explore the jump law of the co-state based on the results in \cite{Shaikh} and show how we can use such information to efficiently solve optimal control for hybrid systems.
Our contributions include the following:
\begin{itemize}
    \item We present jump laws of the co-state for the optimal control of general state-dependent switched systems, in both the case of time-invariant switching interface and time-varying switching interface.  
    We also provide rigorous theoretical analysis to prove the correctness of the jump laws.
    \item We then propose a new algorithm for solving optimal control of hybrid systems to illustrate the applicability of our theory.
    \item Finally, we compare the efficiency of our approach with existing algorithms.
\end{itemize}

This paper is organized as follows: 
In Section \ref{sec:Time-invarying}, we formulate the problem and propose the jump law of the co-state when the switching interface is time-invariant.
We then extend our approach to the scenario of the time-varying switching interface in Section \ref{sec:Time-varying}.
We present simulation results in Section \ref{sec:simulation} to verify the correctness of the jump laws.
Next, we show how the jump laws of co-state can actually be used to solve the optimal control problem of state-dependent switching system more efficiently by comparing with existing algorithm numerically in Section \ref{sec:application}.
Finally, we conclude our article in Section \ref{sec:conclusion} and envision our future work in this direction.
\section{Problem Description} \label{sec:Time-invarying}
We consider an optimal control problem exhibiting discontinuities in the system dynamics and in its performance index. 
Specifically, let
\begin{align} \label{eq:systemdynamic}
\dot x = f(x,u,t)=\left\{\begin{matrix}
f_{1}(x,u,t), \; \mathrm{if} \; g(x) \leq 0 \\
f_{2}(x,u,t), \; \mathrm{if} \; g(x)\geq 0
\end{matrix}\right.,
\end{align}
where $x\in \mathbb{R}^{n}$, $u\in \mathbb{R}^m$, $g(x)=0$ is the switching interface, the initial condition is $x(t_0)=x_0$, $g(x_0)<0$, and the desired final condition is $x(t_f)=x_f$, $g(x_f) > 0 $ at a fixed final time $t_f$.
Our objective is to minimize the performance index:
\begin{equation}\label{eq:systemPerformance}
J = \phi(x(t_f)) +\int_{t_0}^{t_f} L(x,u,t) {\rm d} t,
\end{equation}
where 
\begin{equation}
L(x,u,t) = \left\{\begin{matrix}
L_{1}(x,u,t),      \; \mathrm{if}\; g(x) \leq 0\\ 
L_{2}(x,u,t),     \; \mathrm{if} \; g(x) \geq  0
\end{matrix}\right.,
\end{equation}
and $\phi(x(t_f))$ is the terminal cost.
Note that $J$ is a function of the switching time instant $\tau$.
Given a switching time instant $\tau$, this optimal control problem may be regarded as a two-phase fixed final time optimal control problem.
\begin{remark}
In this paper, the switching interface is assumed to be differentiable.
The gradient of a scalar function, $g(x)$, is defined as a column vector $m(x) = \frac{\partial g(x)}{\partial x}$.
\end{remark}
\begin{remark} 
We make the physical restriction that the 
state variables must  be continuous at the interface, i.e., $x(\tau+) = x(\tau-)=x(\tau)$.
There are no jumps in the system itself, and therefore no impulsive causes.
\end{remark}
\begin{remark}
The theory can be extended to systems with multiple switching interfaces.
\end{remark}

We define the Hamiltonian of the system as usual: $H(x,\lambda,u,t) = L(x,u,t) + \lambda^{\top} f(x,u,t)$.
For a time-invariant system, the Hamiltonian is known to be constant~\cite{optimalcontrolbook}.
In the following, we will prove that the Hamiltonian is continuous with respect to time at the switching interface for two-modes systems.
The derivation uses standard variational principles and follows \cite{Verriest_M3D}.
\begin{theorem} \label{theorem:Hcontinuous}
The Hamiltonian $H$ for the two-modes problem \eqref{eq:systemdynamic}-\eqref{eq:systemPerformance} is continuous as a function of time when the state crosses the interface.
\end{theorem}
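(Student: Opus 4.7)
The plan is to follow the classical calculus-of-variations approach used in \cite{Verriest_M3D} and \cite{Shaikh}, augmenting the cost with the dynamic constraints via costate multipliers $\lambda(t)$ and the interface constraint $g(x(\tau))=0$ via a scalar multiplier $p$:
\begin{equation*}
\bar J \;=\; \phi(x(t_f)) + p\,g(x(\tau)) + \int_{t_0}^{\tau}\!\bigl[H_1 - \lambda^{\top}\dot x\bigr]\,{\rm d}t + \int_{\tau}^{t_f}\!\bigl[H_2 - \lambda^{\top}\dot x\bigr]\,{\rm d}t,
\end{equation*}
where $H_i = L_i + \lambda^{\top} f_i$. After integrating each $\lambda^{\top}\dot x$ by parts, I would compute the first variation with respect to the five independent objects: the control $\delta u(\cdot)$, the interior trajectory $\delta x(\cdot)$, the terminal state $\delta x(t_f)$, the switching-interface value $\delta x(\tau)$, and the switching time $\delta\tau$ itself.

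First I would extract the ``interior'' conditions in the usual way: the coefficient of $\delta u$ gives $H_u=0$; the coefficient of $\delta x(t)$ in each phase gives the costate dynamics $\dot\lambda = -H_x$; the coefficient of $\delta x(t_f)$ gives the transversality $\lambda(t_f)=\phi_x(x(t_f))$. These are standard within each mode. The meat of the proof lies in the two switching-time coefficients. Forcing the coefficient of $\delta x(\tau)$ to vanish, using the tangency constraint $\nabla g^{\top}\delta x(\tau)=0$ absorbed through $p$, produces the known costate jump law $\lambda(\tau^-)-\lambda(\tau^+) = p\,\nabla g(x(\tau))$ from \cite{Shaikh}. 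Finally, setting the coefficient of $\delta\tau$ to zero and substituting the jump law causes the $p\,\nabla g^{\top}\dot x$ cross-terms to collapse, leaving exactly $H_1(\tau^-)=H_2(\tau^+)$, which is the claim.

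The main obstacle, and the step I would be most careful about, is the bookkeeping at the interface. Because the optimal trajectory has a kink at $\tau$ (the one-sided derivatives $f_1(\tau)$ and $f_2(\tau)$ generically disagree), perturbing the switching time by $\delta\tau$ induces \emph{different} first-order effects on $x(\tau^-)$ and $x(\tau^+)$, even though $x$ itself is continuous. The cleanest way to handle this is to introduce a total variation $\Delta x = \delta x + \dot x\,\delta\tau$, which is single-valued at the perturbed switching instant (since the perturbed trajectory is continuous at its own switching time), express the interface tangency $\nabla g^{\top}\Delta x = 0$ in these coordinates, and only then convert back to the pair $(\delta x(\tau^\pm),\,\delta\tau)$. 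The boundary terms $-[\lambda^{\top}x]$ produced by the integration by parts then combine correctly with the $H_i(\tau^\pm)\,\delta\tau$ terms coming from differentiating the integral limits; once the costate jump law is used to eliminate $p\,\nabla g$, the net coefficient of $\delta\tau$ reduces unambiguously to $H_1(\tau^-)-H_2(\tau^+)$, giving the desired continuity.
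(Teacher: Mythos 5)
Your proposal is correct and follows essentially the same route as the paper: adjoin the dynamics with costates and the interface constraint with a scalar multiplier, take the first variation accounting for the induced shift $\delta x(\tau+\delta\tau)=\delta x(\tau)+\dot x\,\delta\tau$ at the kink, choose the costates to kill the $\delta x$ coefficients (which yields the jump condition $\lambda_2(\tau)-\lambda_1(\tau)+\nu\,\partial g/\partial x=0$), and read off $H_1(\tau)=H_2(\tau)$ from the surviving $\delta\tau$ coefficient. The careful handling of the one-sided derivatives at $\tau$ that you flag is exactly the point the paper addresses with the same total-variation identity.
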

\begin{proof}
Consider the following cost function:
\begin{align*}
&J(\tau) = \phi(x(t_f)) +\int_{t_0}^{t_f} L(x(t), u(t),t) {\rm d}t \\
&=\phi(x(t_f))+\nu g(x(\tau)) +\\& \int_{t_0}^{\tau} L_{1} (x_{1}(t), u_{1}(t),t) {\rm d} t +\int_{\tau}^{t_f} L_{2}(x_{2}(t),u_{2}(t),t) {\rm d}t \tag{A} \label{eqn:Def_J_2} \\
&= \phi(x(t_f))+\nu g(x(\tau))+ \\& \int_{t_0}^{\tau} L_{1} (x_{1}(t), u_{1}(t),t) + \\& \lambda_{1}^\top(f_{1}(x_{1}(t),u_{1}(t),t)-\dot x_{1}(t)) {\rm d} t +\\& \int_{\tau}^{t_f} L_{2}(x_{2}(t),u_{2}(t),t)+ \\& \lambda^{\top}_{2}((f_{2}(x_{2}(t),u_{2}(t),t)-\dot x_{2}(t)) {\rm d}t\\
&= \phi(x(t_f))+\nu g(x(\tau))+\\&\int_{0}^\tau H_{1}(x(t),\lambda_1(t),u_1(t),t)-\lambda_1^\top \dot x_1(t) {\rm d}t +\\
& \int_{\tau}^{t_f} H_{2}(x_2(t),\lambda_2(t),u_2(t),t)-\lambda_2^\top\dot x_2(t) {\rm d}t,
\end{align*}
where $\nu$ is the Lagrange multiplier.
In \eqref{eqn:Def_J_2}, $\tau$ is the as-yet-unknown time when the interface is crossed.
The effect of a small variation $\delta \tau$, at the cross-over time, is
\begin{align*}
&J(\tau+\delta \tau) =  \phi(x(t_f))+\nu g(x(\tau+\delta \tau))+ \\& \int_{0}^{\tau+\delta \tau} H_{1}(x_{1}(t)+\delta x_1,\lambda_{1}(t),u_{1}(t)+\delta u_1,t){\rm d}t +\\& \int_{\tau+\delta \tau}^{t_f} H_{2}(x_{2}(t)+\delta x_2, \lambda_{2}(t),u_{2}(t)+\delta u_2,t) {\rm d}t \\& -\int_{0}^{\tau+\delta \tau} \lambda_{1}^\top(\dot x_{1}+\dot \delta x_1) {\rm d}t -\int_{\tau+\delta \tau}^{t_f} \lambda_{2}^\top
(\dot x_{2}+\dot \delta x_2) {\rm d}t \\
& = \phi(x(t_f))+\nu g(x(\tau+\delta \tau))+ \\& \int_{0}^{\tau} H_{1}(x_{1}(t)+\delta x_1,\lambda_{1}(t),u_{1}(t)+\delta u,t)- \\& \lambda_{1}^\top(t) (\dot x_{1}(t)+\dot \delta x_1) {\rm d}t +\\
&\int_{\tau}^{\tau+\delta \tau} H_{1}(x_{1}(t)+\delta x_1,\lambda_{1}(t),u_{1}(t)+\delta u,t)- \\ & \lambda_{1}^\top(t) (\dot x_{1}(t)+ \dot \delta x_1) {\rm d}t +\\
& \int_{\tau}^{t_f} H_{2}(x_{2}(t)+\delta x_2, \lambda_{2}(t),u_{2}(t)+\delta u_2,t) - \\&\lambda_{2}^\top(t) (\dot x_{2}(t)+\dot \delta x_2) {\rm d}t -\\&
\int_{\tau}^{\tau+\delta \tau} H_{2}(x_{2}(t)+\delta x_2, \lambda_{2}(t),u_{2}(t)+\delta u_2,t) - \\& \lambda_{2}^\top(t) (\dot x_{2}(t)+\dot \delta x_2) {\rm d}t
\end{align*}
The variation in $J$ due to variations in the control $u(t)$ and the cross-over time $\tau$ is
\begin{equation} \label{eq:deltaJ}
{\small 
\begin{aligned}
\delta J &= \left [ \left (\frac{\partial \phi}{\partial x}-\lambda^\top_{2}\right)\delta x \right]_{t_f}  +\left(\lambda_{2}-\lambda_{1}+\nu \frac{\partial g}{\partial x}|_{\tau} \right)\delta x+
\\& (H_{1}(x_{1}(\tau),u_{1}{\tau},\lambda_{1}(\tau),\tau) - H_{2}(x_{2}(\tau),u_{2}(\tau),\lambda_{2}(\tau),\tau))\delta \tau \\
&+ \int_{0}^{\tau} \left \{ \left( \frac{\partial H_{1}}{\partial x} + \lambda_{1}^{\top}\right )\delta x +\frac{\partial H_{1}}{\partial u}\delta u \right \} {\rm d}t \\
& +
\int_{\tau}^{t_f} \left \{ \left( \frac{\partial H_{2}}{\partial x} + \lambda_{2}^{\top}\right )\delta x +\frac{\partial H_{2}}{\partial u}\delta u \right \} {\rm d}t.
\end{aligned}}
\end{equation}
Note that $\delta x_{1,2}(\tau+\delta \tau) = \delta x_{1,2}(\tau)+\dot x_{1,2} \delta \tau$.
Since the initial state is fixed, $\delta x(t_0) = 0$.
The remaining parts of \eqref{eq:deltaJ} hold for arbitrary $\lambda$.
Choosing $\lambda$ such that
\begin{equation}\label{eqn:graidentperp}
\begin{aligned}
&\dot \lambda_{1}(t) = - \left(\frac{\partial H_{1}}{\partial x}\right)^\top, \quad t_0< t<\tau \\
&\dot \lambda_{2}(t) = -\left(\frac{\partial H_{2}}{\partial x}\right)^\top, \quad \tau< t<t_f \\
&\lambda_{2}(\tau)- \lambda_{1}(\tau) + \nu\frac{\partial g}{\partial x}|_{\tau} =0,\\
&\lambda_{2}(t_f) =\left( \frac{\partial \phi}{\partial x}\right)^\top \mid_{x=x_f},
\end{aligned}
\end{equation}
avoids the need to compute the induced state perturbations and substituting the state equations simplifies \eqref{eq:deltaJ} to
\begin{equation}
\begin{aligned}
\delta J &= \\ &\left[ H_{1}(x_{1}(\tau),u_{1}{\tau},\lambda_{1}(\tau),\tau) - H_{2}(x_{2}(\tau),u_{2}(\tau),\lambda_{2}(\tau),\tau)\right] \\ &\delta \tau  +
\int_{0}^{\tau} \frac{\partial H_{1}}{\partial u} \delta u {\rm d}t +\int_{\tau}^{t_f} \frac{\partial H_{2}}{\partial u} \delta u {\rm d}t.
\end{aligned}
\end{equation}
The necessary conditions for stationary of $J$ follow from $\delta J = 0$ and since $\delta u$ may be chosen arbitrarily, by virtue of the fundamental lemma, the integral condition lifts to the condition $\frac{\partial H_{2}}{\partial u}= 0$, $\frac{\partial H_{1}}{\partial u} = 0$ and 
\begin{equation}
    H_{1}(x_{1}(\tau), \lambda_{1}(\tau), u_{1}(\tau),\tau) =  H_{2}(x_{2}(\tau), \lambda_{2}(\tau), u_{2}(\tau),\tau),
\end{equation}
expressing that the Hamiltonian $H$ should be continuous as a function of time when the state crosses the interface \footnote{This theorem remains valid for free terminal state optimal control problems.}.
\end{proof}

Based on Theorem \ref{theorem:Hcontinuous}, we obtain that at the cross-over time, $H_{2}^o(x_{2}(\tau),\lambda_{2}(\tau),u_{2}(\tau)) = H_{1}^o(x_{1}(\tau),\lambda_{1}(\tau),u_{1}(\tau))$, i.e., $(L_{2}(\tau) +\lambda_{2}^{\top}(\tau) f_{2}(\tau))^o = (L_{1}(\tau)+\lambda_{1}^{\top}(\tau) f_{1}(\tau))^o$.
To simplify our notation, we omit the superscript $o$ and the argument $(\tau)$, and the following equations are all with $o$ and $(\tau)$, which means under the optimal control and at the switching time~$\tau$.
Define $\langle f \rangle = \frac{1}{2}(f_{2}(\tau) + f_{1}(\tau))$, $\Delta f = f_{2}(\tau)-f_{1}(\tau)$, $\Delta \lambda = \lambda_{2}(\tau)-\lambda_{1}(\tau)$, and $\langle \lambda \rangle = \frac{1}{2}(\lambda_{2}(\tau)+\lambda_{1}(\tau))$, we have
\begin{equation} \label{eq:form1}
\begin{aligned}
& L_{2}+\lambda_{2}^{\top} f_{2} = L_{1}+\lambda_{1}^{\top} f_{1} \\
& \Rightarrow \Delta L  = -\Delta \lambda^{\top} f_{1}-\lambda_{2}^{\top} \Delta f.
\end{aligned}
\end{equation}
Similarly,
\begin{equation}\label{eqn:form2}
\begin{aligned}
& L_{2}+\lambda_{2}^{\top} f_{2} = L_{1}+\lambda_{1}^{\top} f_{1} \\
& \Rightarrow \Delta L = -\lambda_{1}^{\top} \Delta f-f_{2}\Delta \lambda^{\top}.
\end{aligned}
\end{equation}
Adding \eqref{eq:form1} to \eqref{eqn:form2} leads to
\begin{equation} \label{eq:lefteqright}
\Delta \lambda^{\top} \langle f \rangle = - \left(\Delta L + \langle \lambda^{\top} \rangle \Delta f \right).
\end{equation}
Multiply both sides of \eqref{eq:lefteqright} by $\left(\frac{\partial g(x)}{\partial x}|_{x^o} \right)$ (i.e., the gradient w.r.t. the state at the interface under the optimal control) to obtain
\begin{equation} \label{eq:approching}
\Delta \lambda^{\top} \langle f \rangle \left(\frac{\partial g(x)}{\partial x}|_{x^o} \right) = - \left(\Delta L + \langle \lambda^{\top} \rangle \Delta f \right)\left(\frac{\partial g(x)}{\partial x}|_{x^o} \right).
\end{equation}
The optimality condition derived in \eqref{eqn:graidentperp} shows that under the optimal control, $\Delta \lambda \in \left( T_{x^*} g(x)\right )^\perp$ (i.e., $\Delta \lambda$ is parallel to the gradient of the interface), thus we can change the left side of \eqref{eq:approching} to the following, 
\begin{equation} \label{eq:changeorder}
\Delta \lambda^{\top} \langle f \rangle \left(\frac{\partial g(x)}{\partial x}|_{x^o} \right) = \left(\frac{\partial g(x)}{\partial x} |_{x^o}\right)^\top \langle f\rangle \Delta \lambda.
\end{equation}
\begin{proof}
Denote $m = \left(\frac{\partial g(x)}{\partial x}|_{x^o} \right)$.
First, we show that
\begin{align*}
\Delta \lambda^\top \langle f\rangle m = m^\top \langle f\rangle \Delta \lambda.
\end{align*}
Since $m$ is parallel to $\Delta \lambda$, we can write them respectively as $m = t_1 v$, $\Delta \lambda = t_2 v$, where $v$ is a direction vector, $t_1$, $t_2$ are scalars.
Then $\Delta \lambda^\top \langle f\rangle m = t_1 v^\top\langle f\rangle t_2 v=t_1 t_2 v^\top\langle f\rangle v$, $ m^\top \langle f\rangle \Delta \lambda  =t_1 v^\top \langle f\rangle t_2 v = t_1 t_2 v^\top\langle f\rangle v$.
\end{proof}
Substitute \eqref{eq:changeorder} into \eqref{eq:approching}, we have the following equation:
\begin{equation}
\left(\frac{\partial g(x)}{\partial x}|_{x^o} \right)^\top \langle f\rangle \Delta \lambda = - \left(\Delta L + \langle \lambda^{\top}\rangle \Delta f \right)\left(\frac{\partial g(x)}{\partial x} |_{x^o}\right),  
\end{equation}
which gives the relation
\begin{equation} \label{eq:finalresult}
\Delta \lambda = -\frac{\left(\Delta L + \langle \lambda^{\top}\rangle \Delta f \right)m}{ m^\top \langle  f\rangle}.
\end{equation}
where $m = \frac{\partial g(x)}{\partial x}|_{x^o}$.
This relation specifies the quantitative behavior of the co-states at the interface.
\section{Time-varying switching interface} \label{sec:Time-varying}
In this section, we present the jump law of co-state $\Delta \lambda$ under the time-varying (TV) switching interface.
\subsection{Problem formulation}
Consider the following general switched system with a time-varying switching interface $g(x,t) = 0$:
\begin{align} \label{eqn:oriTVSystem}
\dot x = \left\{\begin{matrix}
f_{1}(x,u,t), \; \mathrm{if} \; g(x,t)\leq 0 \\ 
f_{2}(x,u,t),\; \mathrm{if} \; g(x,t) \geq 0
\end{matrix}\right. ,
\end{align}
and stage cost
\begin{align} \label{eqn:oriTVPerformance}
L = \left\{\begin{matrix}
L_{1}(x,u,t), \; \mathrm{if} \;  g(x,t) \leq 0 \\ 
L_{2}(x,u,t), \; \mathrm{if} \;g(x,t) \geq 0
\end{matrix}\right. .
\end{align}
Suppose the initial state $x_0$, the initial time $t_0$, and final time $t_f$ are given.
\subsection{Theoretical analysis}
Let the time-varying switching interface be given by $m^T x+ \mu t +c = 0$ \footnote{Note that, in general, the interface may be nonlinear, in which case, this surface is the tangent plane at the switching point.}.
We change this system into a time-invariant (TIV) system by conducting a variable augmentation.  Define the new augmented variable $\Bar{x}=[x,t]^\top$.
Then the original system \eqref{eqn:oriTVSystem} is turned into the following time-invariant system:
\begin{align}
\dot  {\Bar{ x}} =  \begin{bmatrix}
f(\bar{x},u)\\ 
1
\end{bmatrix} =    F(\bar{x},u).
\end{align}
The stage cost is $\bar{L}(x,u,t) = L(\bar{x},u)$.
The new Hamiltonian is then $\bar{H}(\bar{x},u,\bar{\lambda})=\bar{L}(\bar{x},u)+\bar{\lambda}^\top F(\bar{x},u)$.
In this case, the optimal control condition gives us
\begin{align*}
&\frac{\partial{\bar{H}}}{\partial u} = 0 \\ \Rightarrow\qquad &\frac{\partial{\bar{L}}}{\partial u}+\bar{\lambda}^\top\frac{\partial F}{\partial u} = 0 \\
 \Rightarrow \qquad &\frac{\partial L}{\partial u} +[\lambda^\top, \lambda_t]\begin{bmatrix}
\frac{\partial f}{\partial u}\\ 0
\end{bmatrix} =\frac{\partial L}{\partial u} + \lambda^\top\frac{\partial f}{\partial u} = 0.
\end{align*}
The Euler-Lagrangian equation gives us

\resizebox{.9\linewidth}{!}{
\begin{minipage}{\linewidth}
\begin{align} 
    \dot {\bar{\lambda}} = \begin{bmatrix}
    \dot \lambda\\ 
    \dot \lambda_t
    \end{bmatrix} = -\left(\frac{\partial \bar{H}}{\partial x}\right)^\top = \begin{bmatrix}
    -\left(\frac{\partial \bar{H}}{\partial \bar{x}}\right)^\top\\ 
    -\frac{\partial \bar{H}}{\partial t} 
    \end{bmatrix}=
    \begin{bmatrix}
    -\left(\frac{\partial H}{\partial x}\right)^\top\\ 
    -(\frac{\partial L}{\partial t}+\lambda^\top \frac{\partial f}{\partial t})
    \end{bmatrix}.
\end{align}
\end{minipage}
}

\noindent
That is
\begin{align*}
\dot \lambda &= -\left(\frac{\partial H}{\partial x}\right)^\top, \\
\dot \lambda_t &= -\left(\frac{\partial L}{\partial t} + \lambda^\top \frac{\partial f}{\partial t}\right).
\end{align*}
With this augmented scheme and using the jump law in~\eqref{eq:finalresult}, we obtain the following law:
\begin{align} \label{eqn:jumplawTV}
    \Delta \lambda = - \frac{(\Delta L+\langle \lambda \rangle \Delta f)m}{ \langle m^T f + \mu\rangle},
\end{align}
where the notation $\langle \cdot \rangle$ means the average; $\mu$ is the partial derivative of the time-varying switching interface $m^\top x+\mu t +c = 0$ with respect to time $t$.
\section{Illustrative simulation examples} \label{sec:simulation}
In this section, we solve this optimal control problem by doing a brute force search of the switching time and then verify our proposed co-state jump law \eqref{eq:finalresult} and \eqref{eqn:jumplawTV}.
\subsection{Second-order system with time-invariant switching interface}
In this scenario, we consider the following second-order system:
\begin{align*}
\mathrm{s1}:
\left\{\begin{matrix}
\dot x_1 = 2 x_2\\ 
\dot x_2 = u
\end{matrix}\right.  , \quad  x_1^2+  x_2^2 \leq 1,
\end{align*}
\begin{align}
\mathrm{s2}:
\left\{\begin{matrix}
\dot x_1 = x_2\\ 
\dot x_2 = u
\end{matrix}\right. , \quad x_1^2+  x_2^2\geq 1 . 
\end{align}
The stage cost is given by:
\begin{align*}
L= \left\{\begin{matrix}
\frac{1}{8}u^2  , \; x_1^2+ x_2^2 \leq 1\\ \\
 \frac{1}{2}u^2,\; x_1^2+ x_2^2\geq 1
\end{matrix}\right. .
\end{align*}
We consider the boundary conditions: $x_0 = [0, 0]^\top$, $x_f = [
2 , 2 ]^\top$ and $t_f=2$.
Here the switching interface is a quarter of a unit circle.
The switching time instant with optimal $J=0.7382$ is $\tau = 0.8881$.
The co-state jump is $\Delta \lambda =[-0.0840, -0.1807]^\top$.
The $\Delta \lambda$ calculated using \eqref{eq:finalresult} is $\Delta \lambda = [ -0.0840
    , -0.1807]^{\top}$.
The optimal trajectory $x$ and the co-state $\lambda$ are shown in~\figref{fig:2orderTIVsystem} and~\figref{fig:2orderTIVsystem_lambda} respectively.
\begin{figure} 
  \begin{center}
  \includegraphics[width=3.5in]{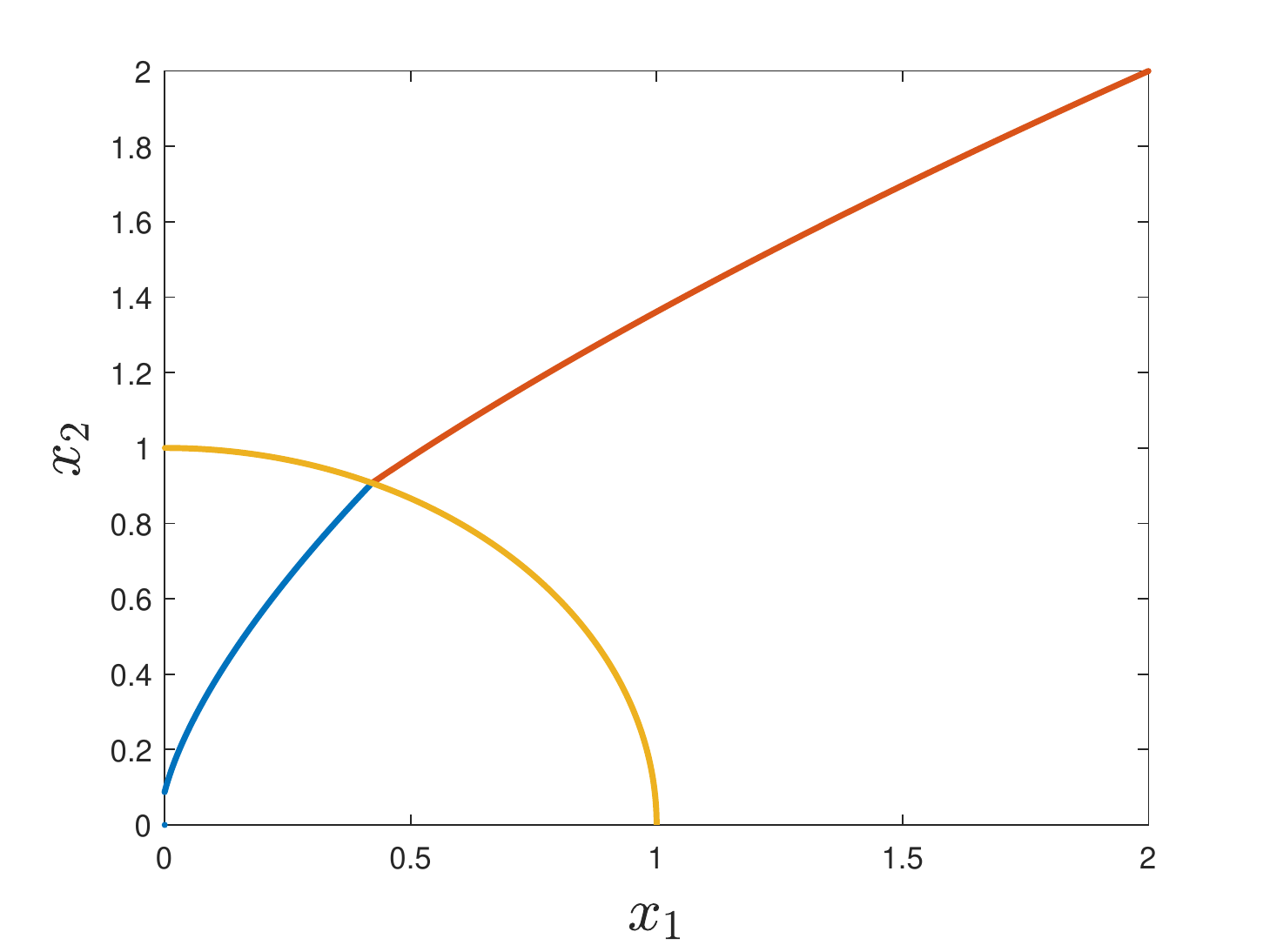}
  \caption{Second order system with time-invariant switching interface: $x$.}
  \label{fig:2orderTIVsystem}
  \end{center}
  \vspace{-15pt}
\end{figure}

\begin{figure} 
  \begin{center}
  \includegraphics[width=3.5in]{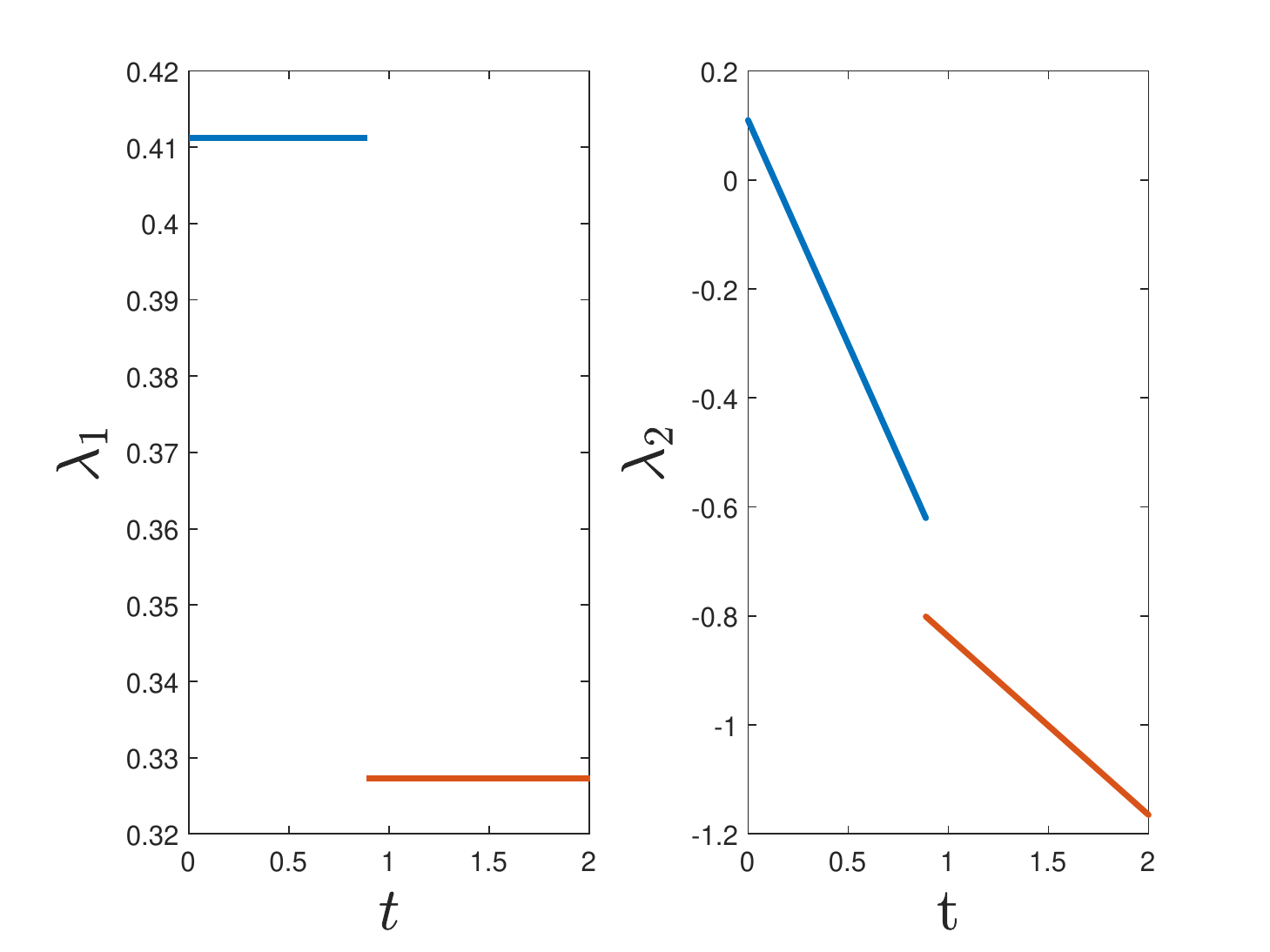}
  \caption{Second-order system with time-invariant switching interface: $\lambda$.}
  \label{fig:2orderTIVsystem_lambda}
  \end{center}
  \vspace{-20pt}
\end{figure}
\subsection{Second-order switching system with time-varying switching interface}
Consider the following second-order system with the time-varying switching interface $g(x,t)=x_1+x_2+t -1= 0$:
\begin{align}
\mathrm{s1}: \quad
\left\{\begin{matrix}
\dot x_1 = 2 x_2\\ 
\dot x_2 = u
\end{matrix}\right.    , \quad  x_1+x_2 + t\leq 1 
\end{align}
switches to 
\begin{align}
\mathrm{s2}: \quad
\left\{\begin{matrix}
\dot x_1 = x_2\\ 
\dot x_2 = u
\end{matrix}\right.  , \quad x_1+ x_2 + t \geq 1  
\end{align}
The discontinuous stage cost is 
\begin{align}
L= \left\{\begin{matrix}
u^2 ,    \quad x_1+x_2+t \leq 1\\ 
\\ \frac{1}{2}u^2,        \quad x_1+x_2+t\geq 1
\end{matrix}\right. .
\end{align}
We consider the boundary conditions: $x_0 = [
0,0]^\top$, $x_f = [
2 ,2 ]^\top$ and $t_f=2$.
The numerical solution gives the optimal switching time instant as $\tau=0.4790 $ with $J=1.1539$ and $\Delta \lambda= [0.1317,0.1318]^\top$.
$\Delta \lambda$ using jump law of \eqref{eqn:jumplawTV} is $[0.1318, 0.1318]^\top$.
The optimal trajectory $x$ and the co-state $\lambda$ are shown in~\figref{fig:tvswitching} and~\figref{fig:tvswitching_lambda} respectively.
\begin{figure}
  \begin{center}
  \includegraphics[width=3.5in]{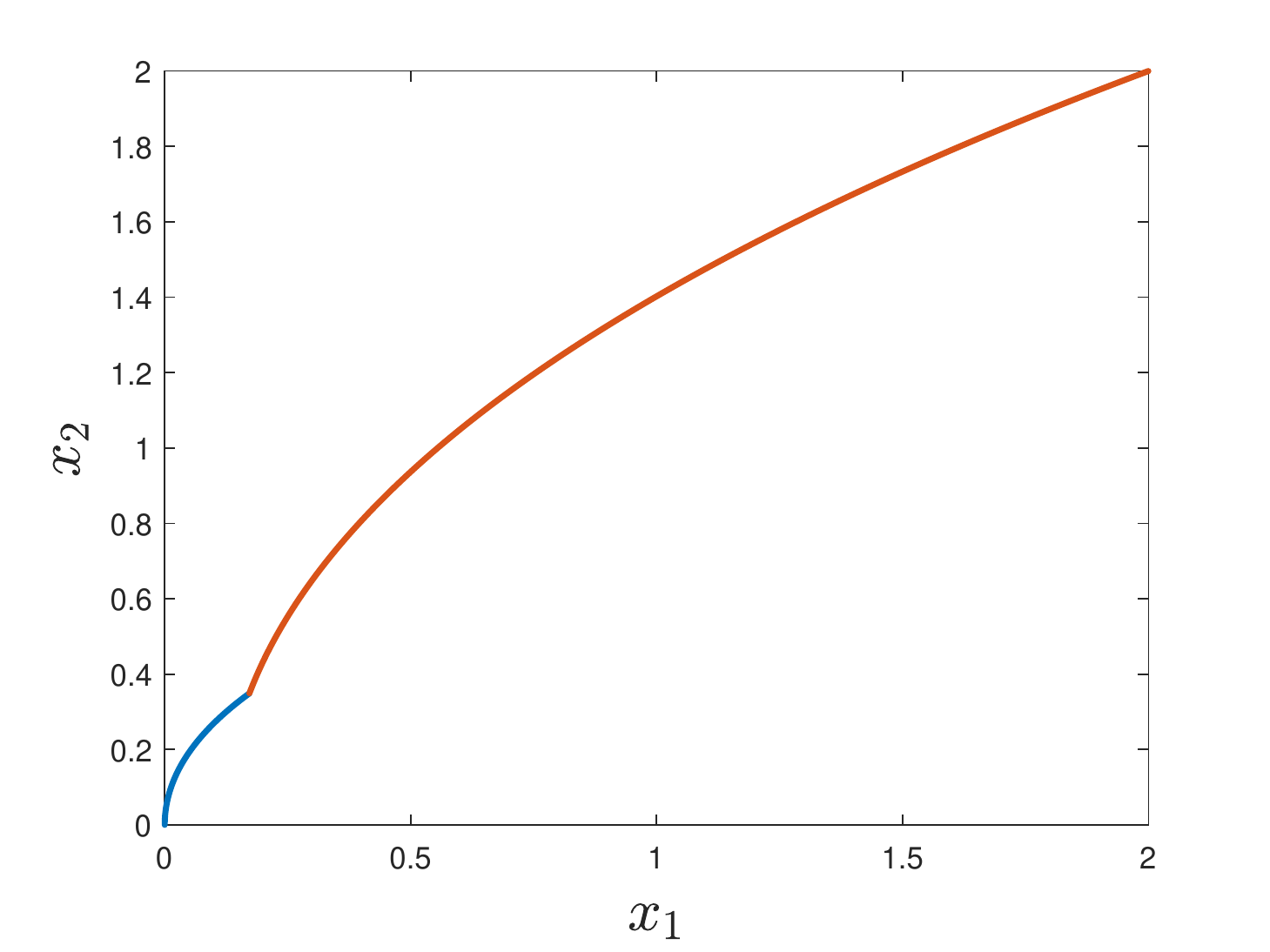}
  \caption{Second-order switching system with time-varying switching interface: $x$.}
   \label{fig:tvswitching}
  \end{center}
  \vspace{-15pt}
\end{figure}
\begin{figure}
  \begin{center}
  \includegraphics[width=3.5in]{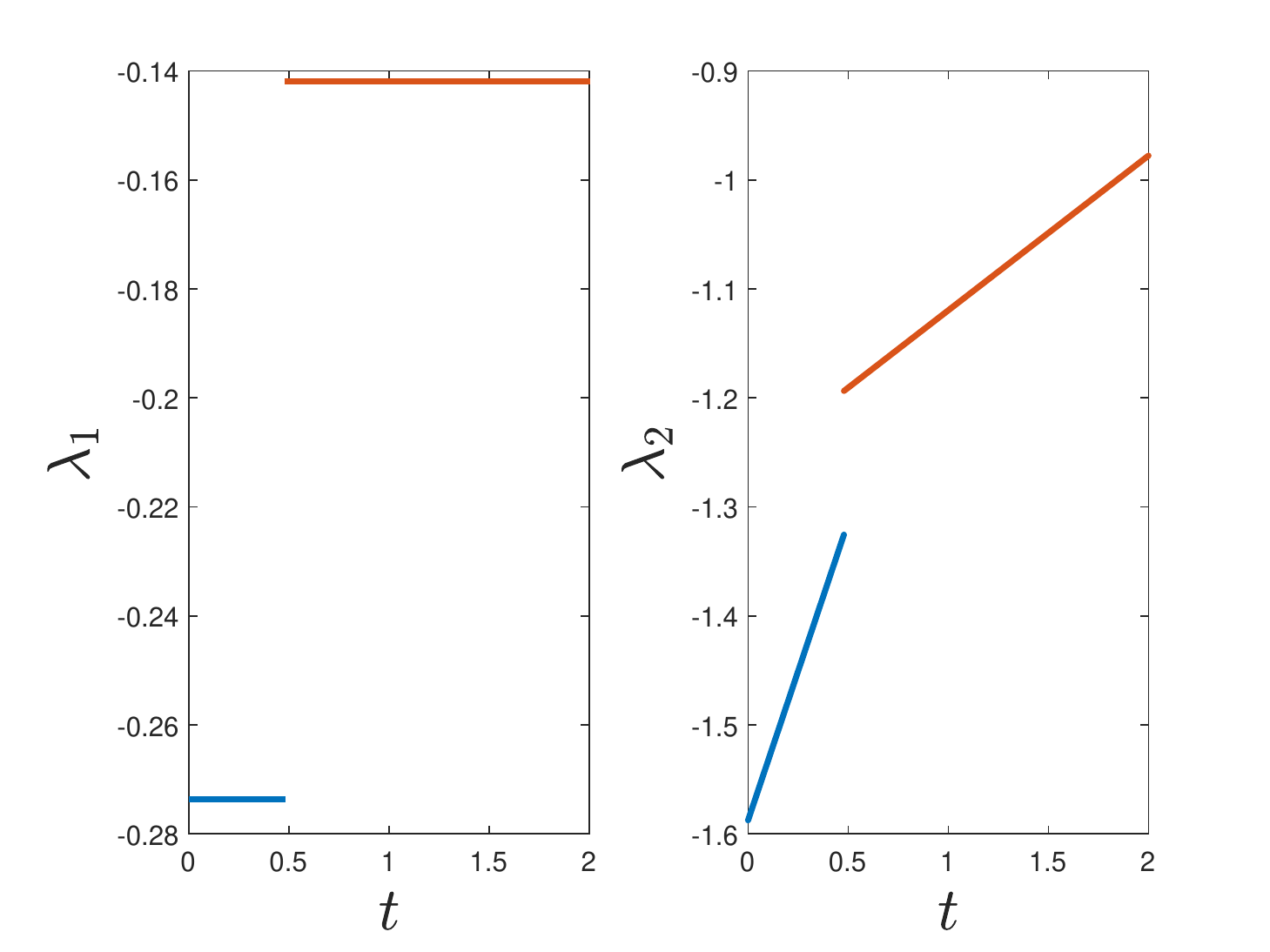}
  \caption{Second-order switching system with time-varying switching interface: $\lambda$.}
   \label{fig:tvswitching_lambda}
  \end{center}
  \vspace{-20pt}
\end{figure}

\section{Application scenario} \label{sec:application}
In this section, we demonstrate the effectiveness of the two jump laws of co-state (i.e., \eqref{eq:finalresult} and \eqref{eqn:jumplawTV}) in solving optimal control problems for state-dependent switching systems.
Combined with the Matlab toolbox ``bvp4c'', we can solve such problems efficiently and precisely.

\subsection{Proposed algorithm (GEL)}
Suppose we have $x\in \mathbb{R}^{n}$. The TIV switching interface is $g(x)=0$ and the switching time is $\tau$.
To avoid confusion, for the following sections, we claim that $f_{-}(x,u,t)$ and $f_{+}(x,u,t)$ represent the system dynamics before switching and after switching respectively.
Similarly, $L_{-}(x,u,t)$ and $L_{+}(x,u,t)$ represent the stage cost before switching and after switching respectively.
The initial state is $x_0$, the final time is $t_f$, the final state is $x_f$.
Then we have the following set of constraints for the TIV switching case: 
\begin{align}
\dot x_{+}(t) &= f_{+}(x,u),  \label{eqn: state_p}\\
\dot x_{-}(t) &= f_{-}(x,u),  \label{eqn:state_n}\\
\dot \lambda_{+}(t) &= -\frac{\partial H_{+}}{\partial x} = h_{+}(x,u)  \label{eqn:costate_p}\\
\dot \lambda_{-}(t) &= -\frac{\partial H_{-}}{\partial x} = h_{-}(x,u)   \label{eqn:costate_n}\\
\frac{\partial H_{+}}{\partial u} &= 0 \rightarrow \phi_{+}(\lambda_{+},u_{+}) = 0 \\
\frac{\partial H_{-}}{\partial u} &= 0  \rightarrow \phi_{-}(\lambda_{-},u_{-}) = 0\\
x_{+}(\tau) &= x_{-}(\tau) \\
g(x_{\pm}(\tau)) &= 0 \label{eqn:swi_left}\\
x_{-}(t_0) &= x_0\\
 x_{+}(t_f) &= x_f  \\
\lambda_{+}(\tau) - \lambda_{-}(\tau)& = -\frac{(L_{+}(\tau)-L_{-}(\tau)+\langle\lambda(\tau)\rangle^\top \Delta f(\tau))m(\tau)}{\langle m(\tau)^\top f(\tau)\rangle}.\label{eq:equivalenceP}
\end{align}
where $m$ is the normal vector of the switching interface at the switching point.
In the above problem, $x_{+}(t)$, $x_{-}(t)$, $\lambda_{+}(t)$, $\lambda_{-}(t)$, $u_{+}(t)$, $u_{-}(t)$, $\tau$ are unknown.
So there are 6n+1 unknowns and 6n+1 constraints.
Eliminating $u$, if we know this jump law equation, we can solve the original problem by solving $4n$ differential equations (related to $x$ and $\lambda$, $x \in \mathbb{R}^{n}$, $\lambda \in \mathbb{R}^{n}$) and one unknown $\tau$ with $4n+1$ boundary constraints.
When the systems have time-varying switching interface, we only need to replace the jump law in~\eqref{eq:equivalenceP} with \eqref{eqn:jumplawTV}.
\subsection{Examples}
We propose an iterative algorithm (called GEL) to solve the optimal control of such systems efficiently leveraging the (co-state) jump laws in \eqref{eq:finalresult} and \eqref{eqn:jumplawTV}.
The main idea is to construct the following function:
\begin{align} \label{eqn:NewF}
F(\tau) &= \Delta \lambda(\tau) -G(\lambda_{+},\lambda_{-},\tau, x_{-},x_{+})\\
&=
\lambda_{+}-\lambda_{-}-G(\lambda_{+},\lambda_{-},\tau, x_{-},x_{+}),
\end{align}
where 
\begin{align*}
 &G(\lambda_{+},\lambda_{-},\tau, x_{-},x_{+})= \\ &-\frac{(L_{+}(\tau)-L_{-}(\tau)+\langle\lambda(\tau)\rangle^\top \Delta f(\tau))m(\tau)}{\langle m(\tau)^\top f(\tau)\rangle}.   
\end{align*}
Note that this option is not unique.
We can also choose other boundary constraints such as \eqref{eqn:swi_left} to be the function $F(\tau)$.
By using MATLAB ``bvp4c'', we can solve the optimal control problem of this switching system as two boundary value problems with multiple boundary conditions.
Then we find $\tau$ by Newton's Method (a root-founding process), which is
\begin{align}
\tau_{k+1} = \tau_{k} -\alpha \frac{F(\tau)}{F'(\tau)},
\end{align}
where $\alpha$ is the step size.
The stopping criterion is then $|\tau_{k+1}-\tau_{k}|< \mathrm{tol}$ and $|F(\tau_k) - 0| <\mathrm{tol}$.
For this derivative part $\frac{dF}{d\tau} = \frac{F(\tau+\delta \tau)-F(\tau)}{\delta \tau}$, a perturbation scheme is used to obtain $F(\tau+\delta \tau)$.
We summarize the proposed approach in \textbf{Algorithm~\ref{alg:cap}}.
\begin{algorithm}
\caption{Proposed algorithm (named GEL) for solving hybrid system optimal control problem numerically}\label{alg:cap}
\begin{algorithmic}
\State Initialize $\tau_0 = 0.5$, tolerance $\mathrm{tol}=0.0001$, $\delta \tau = 0.1$\;
\For {$k=0:iter$}
\State Solve two-point boundary value problem \eqref{eqn:state_n}, \eqref{eqn:costate_n}, \eqref{eqn: state_p}, \eqref{eqn:costate_p} using bvp4c\;
\State Calculate $F(\tau_k)$ \;
\If{$ |F(\tau_k)-0| < tol$}
\State break
\Else
\State Add a small perturbation $\tau'_k = \tau_k+\delta \tau$\;
\State Calculate $F(\tau'_k)$
\State Calculate $F'(\tau_k) = \frac{F(\tau'_k)-F(\tau_k)}{\delta \tau}$
\State Newton Update: $\tau_{k+1} = \tau_{k} - \frac{F(\tau_k)}{F'(\tau_k)}$\;
\If {$|\tau_{k+1}-\tau_{k}|< tol$}
\State break
\EndIf
\EndIf
\EndFor
\end{algorithmic}
\end{algorithm}

As the authors know, there is no direct way to solve this problem.
Thus we use the ICLOCS2 \cite{nie2018iclocs2} to brute force search the switching time $\tau$ and switching states, and regard the hybrid system into two systems.
In this way, each sub-problem is an optimal control for a continuous system with some known boundary conditions.
This indirect use of ICLOCS2 toolbox is very time-consuming.
Think of a second-order system with a switching interface $x_1+x_2 = 0$.
We need to use two for-loops to search the switching time $\tau$ and switching state $x_1(\tau)$.
In \cite{Shaikh}, the authors proposed an efficient gradient-descent-based algorithm named HMPMAS for solving hybrid switching systems, which is proven much more efficient than the algorithm in \cite{Xuping}.
We compare our algorithm with the algorithm in \cite{Shaikh} on three different systems (one time-varying system with a time-varying switching interface, two are from the paper \cite{Shaikh}).
Given a problem, we compare the time required by different algorithms to generate a solution with the same level of numerical precision.
All the computations are performed using MATLAB 2020b on a personal computer with i7-8665U CPU and 16GB RAM.
Each experiment is run 5 times and the average CPU time is reported for responsible comparison.
\subsubsection{Example 1}
In this example, we use our algorithm to solve a one-order system with a time-varying switching interface.
$x_0 = 0$, $x_f = 2$, $g(x,t)=x+t-1=0$, $t_f=2$.
The system dynamics is
\begin{align*}
\left\{\begin{matrix}
\dot x = tx+ u,\quad x+t\leq 1\\ 
\dot x = u, \quad x+t \geq 1
\end{matrix}\right.,
\end{align*}
while the stage cost
\begin{align}
\left\{\begin{matrix}
L = \frac{1}{2}(x^2+u^2),\quad  x+t \leq  1\\ 
L = \frac{1}{8}(x^2+u^2),\quad  x+t \geq 1
\end{matrix}\right.    .
\end{align}
In this example, the jump law of \eqref{eqn:jumplawTV} is used.
The stopping condition is $\mathrm{tol}=0.0001$. Initial value is $\tau_0 = 0.5$, $x_s^0=0.5$ (initialization of state, which is required by the HMPMAS algorithm).
For the HMPMAS algorithm, the step size is set as $r_k = 0.4$ compromising the convergence speed and converge precision.
Table \ref{tbl:Example2} shows the results compared with \cite{Shaikh}.
We do not present the cost for HMPMAS because the switching state $x_s$ does not reach the switching manifold.
This pair of $(\tau,x_s)$ is not on the switching interface even though this algorithm indeed converges.
For the ``-'' in the ICLOCS2 row, this means the time is too long to obtain the optimal solution and not comparable to the other two algorithms.
\begin{table}[!htp]
\centering
\vspace{-5pt}
\caption{Algorithm Performance Comparison: Example 1}
\vspace{-3pt}
\begin{tabular}{|c||c||c||c||c|}
\hline
Algorithm  & $\tau$ & $J$ & Time (s) &Iteration\\
\hline
ICLOCS2 & 0.7495 & 0.5558 & - & -\\ \hline
 HMPMAS &  \shortstack{0.7230 \\ ($x_s=0.2368$)} & n/a &
 31.0038
 & 19\\ \hline
GEL (Proposed)  &  0.7495   & 0.5558  &16.8556 & 5
 \\\hline
\end{tabular}
\label{tbl:Example2}
\end{table}

\figref{fig:GEL_algo_eg2_x} shows the optimal trajectory of $x$.
In this figure, ``SM'' refers to the switching interface.
It shows that the proposed algorithm has a more precise solution.
\figref{fig:GEL_algo_eg2_tau} plots the trends of absolute error of switching time $\tau$ versus the number of iterations. 
The true optimal $\tau$ is obtained from the toolbox ICLOCS2. 
Formally, the absolute error is given by
\begin{equation} \label{eqn:abserror}
\mathrm{Error}= |\tau-\tau_{\text{ICLOCS2}}|.
\end{equation}
\figref{fig:GEL_algo_eg2_tau} shows that the proposed algorithm needs fewer iterations to reach the global optimal.
\begin{figure} [!htp]
  \begin{center}
  \includegraphics[width=3.5in]{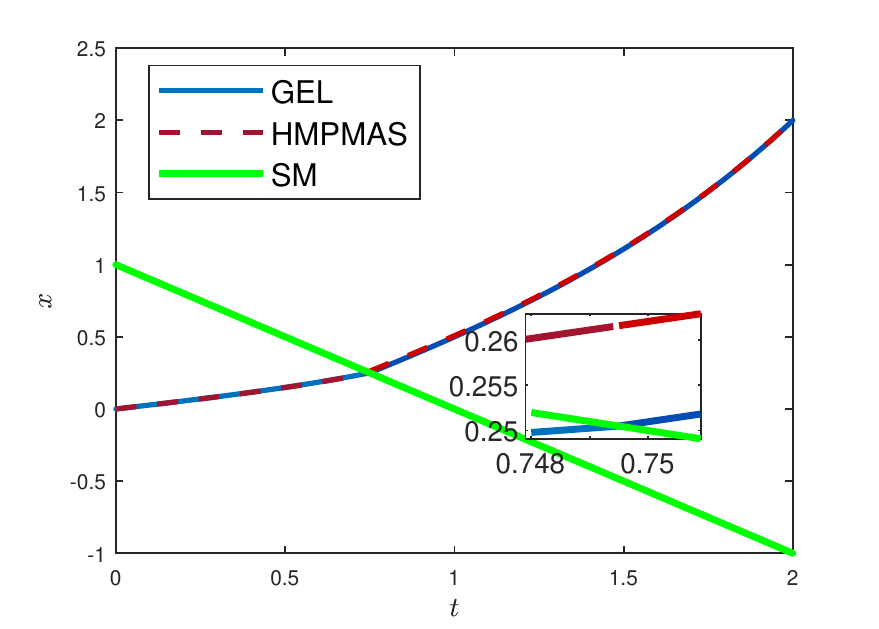}
  \caption{A time-varying system with time-varying switching interface: $x$.}
  \label{fig:GEL_algo_eg2_x}
  \end{center}
  \vspace{-15pt}
\end{figure}
\begin{figure} [!htp]
  \begin{center}
  \includegraphics[width=3.5in]{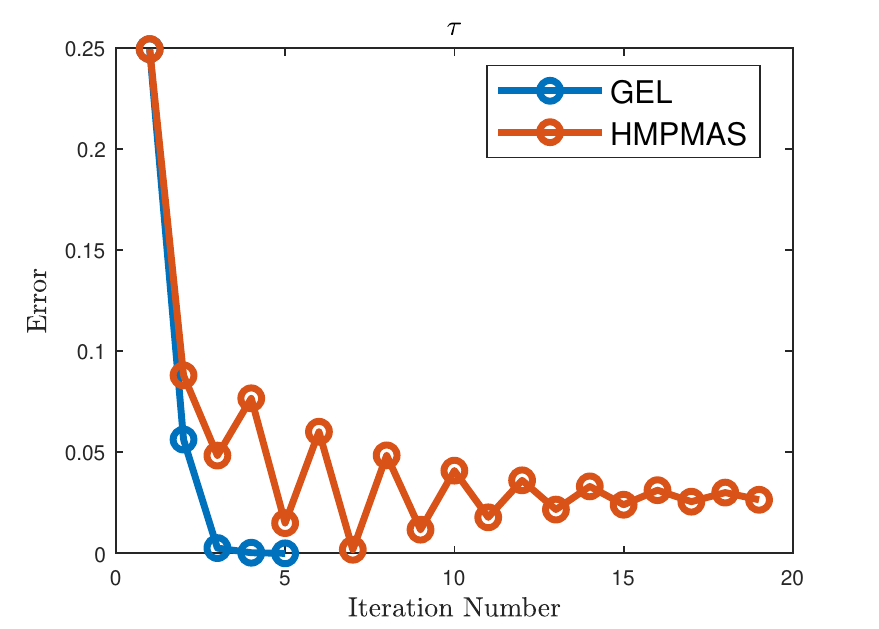}
  \caption{A time-varying system with time-varying switching interface: absolute error of $\tau$.}
  \label{fig:GEL_algo_eg2_tau}
  \end{center}
  \vspace{-15pt}
\end{figure}
\subsubsection{Example 2}
We then consider a second-order system:
\begin{equation*}
\mathrm{s1}: \quad \dot x = \begin{bmatrix}
1.5 & 0\\ 
0 & 1
\end{bmatrix}x+\begin{bmatrix}
1\\ 
1
\end{bmatrix}u,
\end{equation*}
switches to
\begin{equation*}
\mathrm{s2}: \quad \dot x = \begin{bmatrix}
0.5 & 0.866\\ 
0.866 & -0.5
\end{bmatrix}x+\begin{bmatrix}
1\\ 
1
\end{bmatrix}u,    
\end{equation*}
with switching interface $x_1+x_2-7=0$.
The cost function to be minimized is
\begin{equation*}
J(u) = \frac{1}{2}(x_1(t_f)-10)^2 +\frac{1}{2}(x_2(t_f)-6)^2+\frac{1}{2}\int_{0}^{t_f}u^2(t) {\rm d}t   .
\end{equation*}
The initial time, final time, and initial state are $t_0=0$, $t_f=2$ and $x_0=[1,1]^\top$ respectively.
The initial value is set as $\tau_0 = 1.5$, $x_s^0 = [4.5,2.5]^\top$ (same as \cite{Shaikh}), $\mathrm{tol}=0.0001$, $r_k = 0.02\frac{k+1}{k+2}$, $\alpha = 1$.
Table \ref{tbl:Example3} shows the result.
It shows that the proposed algorithm needs less iteration and time to find an optimal value.
Moreover, the proposed algorithm has higher precision compared to that in \cite{Shaikh}.

\begin{table}[!htp]
\caption{Algorithm Performance Comparison: Example 2}
\vspace{-3pt}
\centering
\begin{tabular}{|c||c||c||c||c|} \hline
Methods &   $\tau$ $(x_s)$ & $J$ & Time (s) & Iteration\\ \hline
ICLOCS2 &  \shortstack{1.1624\\ (4.5556, 2.4444)} & 0.1130 &   - &  -\\ \hline
HMPMAS  & \shortstack{1.1630 \\(4.5456, 2.4326)} & n/a & 83.1040& 20\\ \hline
Proposed &  \shortstack{ 1.1625 \\ (4.5562, 2.4438)}  &  0.1130  & 21.3326 &  5\\ \hline
\end{tabular}\\ 
\label{tbl:Example3}
\vspace{-10pt}
\end{table}

In this example, one finds that despite keeping the initialization of the state $x_s$ on the switching interface while $\tau$ starting from different initial values, the convergence to the switching interface is not guaranteed for the HMPMAS algorithm.
However, for the proposed algorithm, we only need to search for one variable $\tau$ and it converges faster.
\figref{fig:GEL_algo_eg3_x} shows the optimal trajectory of $x$.
\figref{fig:GEL_algo_eg3_tau} is the iterative curve of absolute error of switching time $\tau$ as defined in \eqref{eqn:abserror}.
\begin{figure} [!htp]
  \begin{center}
  \includegraphics[width=3.5in]{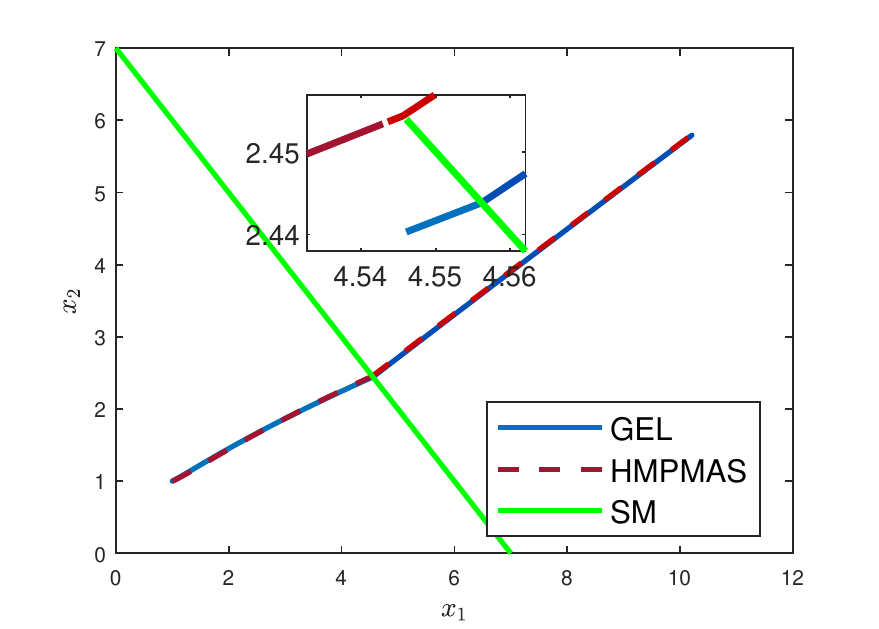}
  \caption{A second order system with time-invariant switching interface: $x$.}
  \label{fig:GEL_algo_eg3_x}
  \end{center}
  \vspace{-10pt}
\end{figure}
\begin{figure} [!htp]
  \begin{center}
  \includegraphics[width=3.5in]{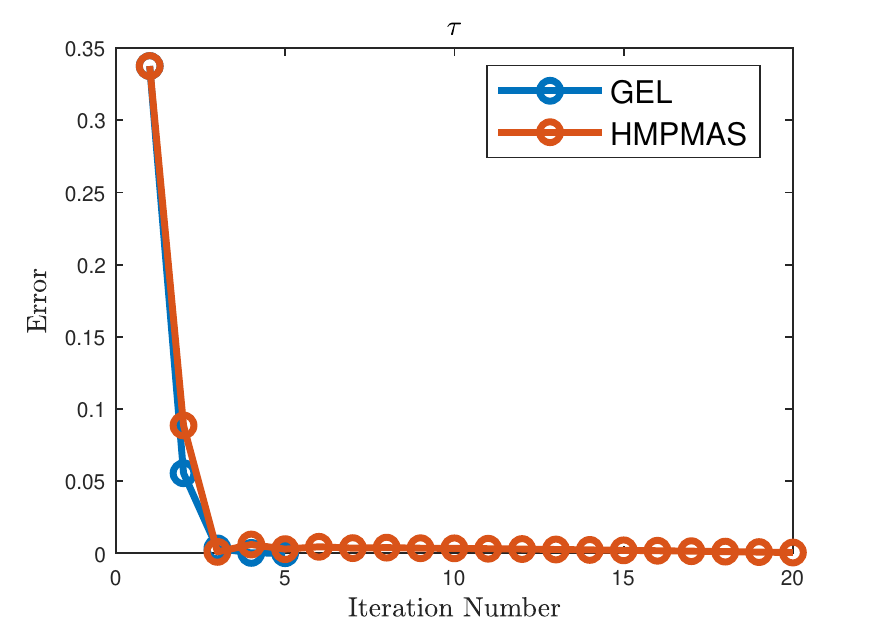}
  \caption{A second order system with time-invariant switching interface: absolute error of $\tau$.}
  \label{fig:GEL_algo_eg3_tau}
  \end{center}
  \vspace{-15pt}
\end{figure}
\subsubsection{Example 3}
The following system:
\begin{align*}
&\mathrm{s1}: \quad \dot x = x+ u x \\
&\mathrm{s2}: \quad \dot x = -x+x u,
\end{align*}
with cost function $J = \int_0^2 \frac{1}{2}u^2 \rm{d} t$.
$t_0=0$, $t_f=2$, $m(x,t)=x-e t=0$, $x_0=1$, $x_f=1$.
The stopping condition is $\mathrm{tol}=0.0001$.
The initial values for HMPMAS algorithm are set as $\tau_0 = 0.5$, $x_s^0 = 0.1$ and step size $r_k =0.04\frac{k+1}{k+2}$.
Table \ref{tbl:Example4} is the result.
Only 5 iterations are needed to reach convergence using the proposed algorithm.
We also find that the HMPMAS algorithm is very fragile to the step size $r_k$.
One has to play with the step size to find the best one to make this algorithm converge.
\figref{fig:GEL_algo_eg4_x} is the trajectory of $x$ using proposed algorithm and HMPMAS.
\figref{fig:GEL_algo_eg4_tau} is the absolute error curve of $\tau$ using proposed algorithm and HMPMAS.
\begin{table} [!htp]
\caption{Algorithm Performance Comparison: Example 3}
\vspace{-7pt}
\centering
\begin{tabular}{|c||c||c||c||c|} \hline
Methods &   $\tau$ & $J$ & Time (s) & Iteration\\ \hline
ICLOCS   &  1.0000   &   0 & -   &- \\ \hline
HMPMAS  &  0.9678  &  0.0032 & 576.2577 & 102   \\ \hline
Proposed & 1.0004  &  8.1008e-04  & 45.2587& 5
 \\ \hline
\end{tabular}
\label{tbl:Example4}
\vspace{-10pt}
\end{table}

\begin{figure} 
  \begin{center}
  \includegraphics[width=3.5in]{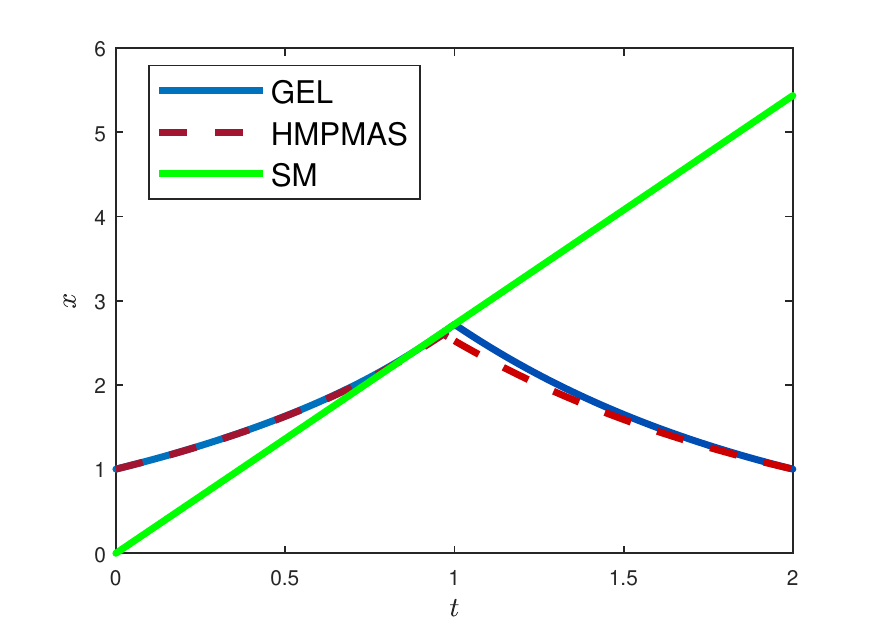}
  \caption{A singular system with time-varying switching interface: $x$.}
  \label{fig:GEL_algo_eg4_x}
  \end{center}
  \vspace{-20pt}
\end{figure}
\begin{figure} 
  \begin{center}
  \includegraphics[width=3.5in]{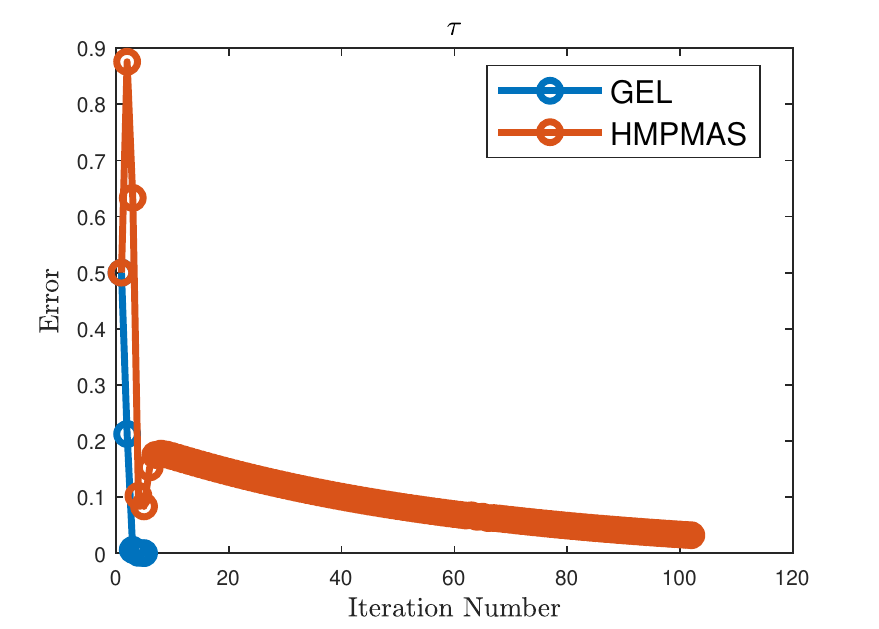}
  \caption{A singular system with time-varying switching interface: absolute error of $\tau$.}
  \label{fig:GEL_algo_eg4_tau}
  \end{center}
  \vspace{-20pt}
\end{figure}
\section{Conclusion} \label{sec:conclusion}
In this paper, we derived the jump law of co-state in optimal control for state-dependent switching systems, 
based on which we developed an efficient algorithm for solving optimal control problems with hybrid systems.
Future work will be extended to the following aspects:
\begin{itemize}
    \item Investigate the generalized function aspects to derive the jump law of co-state directly from the Euler-Lagrange equation along with the ideas of \cite{Hyun2016CausalIM2} and \cite{GELACC_Mi}.
    \item Study the co-state in optimal control of state-dependent switched systems with time delay.
\end{itemize}

\bibliographystyle{IEEEtran}
\bibliography{IEEEabrv,Bibliography}

\vfill
\end{document}